\newtheorem{conj}{Conjecture}[section]
\newtheorem{thm}[conj]{\bf Theorem}
\newtheorem{lemma}[conj]{\bf Lemma}
\def\to{\rightarrow}
\def\Bias{\operatorname{Bias}}
\def\Var{\operatorname{Var}}
\def\Med{\operatorname{Med}}
\def\om{\omega}
\def\Cc{{\mathscr C}}
\def\Fc{{\mathscr F}}
\def\Rb{{\mathbb R}}
\def\1{\mathbbm{1}}
\def\floorbeta{{\lfloor \beta \rfloor}}
\def\wh{\widehat}
\journal{Statistics \& Probability Letters}
\begin{document}

\begin{frontmatter}



\title{Lower bounds for the trade-off between bias and mean absolute deviation}

\author[label1]{Alexis Derumigny}
\author[label2]{Johannes Schmidt-Hieber}
\affiliation[label1]{organization={TU Delft},
             addressline={Mekelweg 5},
             city={Delft},
             postcode={2628 CD},
             country={The Netherlands}}
\affiliation[label2]{organization={University of Twente},
             addressline={Drienerlolaan 5},
             city={Enschede},
             postcode={7522 NB},
             country={The Netherlands}}



\begin{abstract}
In nonparametric statistics, rate-optimal estimators typically balance bias and stochastic error. The recent work on overparametrization raises the question whether rate-optimal estimators exist that do not obey this trade-off.
In this work we consider pointwise estimation in the Gaussian white noise model with regression function $f$ in a class of $\beta$-H\"older smooth functions. Let 'worst-case' refer to the supremum over all functions $f$ in the H\"older class. It is shown that any estimator with worst-case bias $\lesssim n^{-\beta/(2\beta+1)}=: \psi_n$ must necessarily also have a worst-case mean absolute deviation that is lower bounded by $\gtrsim \psi_n.$ To derive the result, we establish abstract inequalities relating the change of expectation for two probability measures to the mean absolute deviation.
\end{abstract}



\begin{keyword}
Bias-variance trade-off \sep mean absolute deviation \sep minimax estimation \sep nonparametric estimation.



\MSC[2020] 62C20 \sep 62G05 \sep 62C05.

\end{keyword}

\end{frontmatter}



\section{Motivation}

Inspired by recent claims that overparametrization challenges the traditional view on the bias-variance trade-off, see for instance 
\cite{belkin2018reconciling, neal2018modern, 2019arXiv191208286N}, we aim to quantify the extend to which the trade-off between bias and stochastic error in nonparametric and highdimensional statistics is universal. The recent work \cite{2020arXiv200600278D} derives lower bounds for the bias-variance trade-off covering standard nonparametric and high-dimensional statistical models. In this work, we take this one step further by deriving a universal lower bound for the trade-off between bias and mean absolute deviation. Such universal lower bounds immediately translate into universal lower bounds for the bias-variance trade-off and are thus stronger. 

Another motivation for our work is that for constructing confidence bands with small diameter in function estimation problems, one needs to find an upper bound for the bias. The bias is hard to estimate from data, see e.g. \cite{MR3127852}. To obtain a small confidence bands, it is therefore desirable to find rate-optimal estimators with negligible bias. Universal lower bounds on the trade-off between bias and stochastic error can be a tool to show that this is impossible in the sense that decreasing the bias necessarily increases the stochastic error.  

\section{Summary of previous work on universal lower bounds for the bias-variance trade-off}

The previous work \cite{2020arXiv200600278D} derives universal lower bounds for the bias-variance trade-off. For nonparametric function estimation, evaluating an estimator either via the squared pointwise risk or the mean integrated squared error,
it is shown that there exists a universal bias-variance trade-off that also cannot be overcome by fitting overparametrized models.

For estimation of a high-dimensional sparse vector in the Gaussian sequence model, the situation is different and the bias-variance trade-off does not always hold. \cite{2020arXiv200600278D} shows that there are estimation problems driven by the worst-case bias. While the convergence rate of the worst-case variance cannot be arbitrarily fast, it can be considerably faster than the minimax estimation rate. 

These lower bounds on the bias-variance trade-off rely on a number of abstract inequalities that all relate the variance to the changes that occur if expectations are taken with respect to different probability measures. To outline the idea, we recall one of these change of expectation inequalities that we modify later on. Let $P$ and $Q$ be two probability distributions on the same measurable space. Denote by $E_P$ and $\Var_P$ the expectation and variance with respect to $P$ and let $E_Q$ and $\Var_Q$ be the expectation and variance with respect to $Q.$ The squared Hellinger distance is defined by $H(P,Q)^2 := \tfrac12 \int (\sqrt{p(\om)}-\sqrt{q (\om)})^2 \, d\nu(\om)$ with $\nu$ a measure dominating both $P$ and $Q$ and $p,q$ the respective $\nu$-densities of $P$ and $Q.$ It can be checked that the Hellinger distance does not depend on the choice of $\nu.$

\begin{lemma}[Lemma 2.1 in \cite{2020arXiv200600278D}]\label{lem.general_lb}
For any random variable $X,$
\begin{align}
    \frac{( E_P[X]-E_Q[X])^2}{4-2H^2(P,Q)}
    \Big( \frac{1}{H(P,Q)}-H(P,Q)\Big)^2
    &\leq \Var_P(X)+ \Var_Q(X).
    \label{eq.lem1_2}
\end{align}
\end{lemma}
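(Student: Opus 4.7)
The plan is to express $\Delta := E_P[X] - E_Q[X]$ through two dual integral identities in which a factor of $1 - H(P,Q)^2$ appears naturally on the left, apply Cauchy--Schwarz to each, and then combine the two resulting bounds to cancel a cross term, producing exactly the coefficient $H^2(4 - 2H^2)$ demanded by the inequality. Write $\mu_P = E_P[X]$, $\mu_Q = E_Q[X]$, abbreviate $H = H(P,Q)$, and recall that $\int \sqrt{pq}\, d\nu = 1 - H^2$.

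The first step is to establish the pair of identities
\begin{align*}
(1 - H^2)\,\Delta &= \int (\sqrt{p} - \sqrt{q})\,\bigl[(X - \mu_P)\sqrt{p} + (X - \mu_Q)\sqrt{q}\bigr]\, d\nu, \\
(1 - H^2)\,\Delta &= \int (\sqrt{p} + \sqrt{q})\,\bigl[(X - \mu_Q)\sqrt{q} - (X - \mu_P)\sqrt{p}\bigr]\, d\nu.
\end{align*}
Both follow by expanding the integrand, using $\int (X - \mu_P) p\, d\nu = 0 = \int (X - \mu_Q) q\, d\nu$ to kill the pure $p$ and pure $q$ contributions, and collecting the surviving $\sqrt{pq}$ terms into $(\mu_P - \mu_Q)\int \sqrt{pq}\, d\nu = \Delta(1 - H^2)$.

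Next I would apply the Cauchy--Schwarz inequality to each identity, using $\int (\sqrt{p} - \sqrt{q})^2\, d\nu = 2H^2$ and $\int (\sqrt{p} + \sqrt{q})^2\, d\nu = 4 - 2H^2$. Setting $C := \int (X - \mu_P)(X - \mu_Q)\sqrt{pq}\, d\nu$, expansion of the squared norms of $(X - \mu_P)\sqrt{p} \pm (X - \mu_Q)\sqrt{q}$ gives
\begin{align*}
(1 - H^2)^2 \Delta^2 &\leq 2H^2\,\bigl(\Var_P(X) + \Var_Q(X) + 2C\bigr), \\
(1 - H^2)^2 \Delta^2 &\leq (4 - 2H^2)\,\bigl(\Var_P(X) + \Var_Q(X) - 2C\bigr).
\end{align*}
Because $C$ appears with opposite signs in the two bounds, taking a convex combination with weight $\alpha = 1 - H^2/2$ on the first and $1 - \alpha = H^2/2$ on the second cancels the $C$ contribution exactly, leaving the coefficient $\alpha \cdot 2H^2 + (1 - \alpha)(4 - 2H^2) = H^2(4 - 2H^2)$ in front of $\Var_P(X) + \Var_Q(X)$. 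Dividing by $H^2(4 - 2H^2)$ and using the identity $(1 - H^2)^2/H^2 = (1/H - H)^2$ yields the claimed inequality.

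The main obstacle I anticipate is conceptual rather than computational: a single Cauchy--Schwarz on $\Delta = \int X(\sqrt{p} - \sqrt{q})(\sqrt{p} + \sqrt{q})\, d\nu$ does not bring out the $(1 - H^2)^2$ factor on the left, nor does it easily recover the precise denominator $4 - 2H^2$. The trick is to guess the two identities whose cross terms in $C$ carry opposite signs, so that a carefully weighted average cancels them and delivers exactly the stated coefficient; everything after that step is routine algebra, and the edge cases $H \to 0$ and $H \to 1$ reduce to the trivial inequalities $\Delta = 0 \leq \Var_P(X)+\Var_Q(X)$ and $0 \leq \Var_P(X)+\Var_Q(X)$, respectively.
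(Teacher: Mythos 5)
Your proof is correct. Note that the paper itself gives no proof of this lemma --- it is quoted verbatim from Lemma 2.1 of the cited reference --- so there is nothing in this manuscript to compare your argument against; it has to stand on its own, and it does. Both identities check out (the pure $p$ and pure $q$ terms vanish by centering, and the surviving $\sqrt{pq}$ terms give $\Delta(1-H^2)$ in each case), the Cauchy--Schwarz applications with $\int(\sqrt p-\sqrt q)^2\,d\nu=2H^2$ and $\int(\sqrt p+\sqrt q)^2\,d\nu=4-2H^2$ are right, the weight $\alpha=1-H^2/2$ lies in $[1/2,1]$ so the convex combination is legitimate, the cross term $C$ cancels exactly, and the resulting coefficient $4\alpha H^2=H^2(4-2H^2)$ together with $(1-H^2)^2/H^2=(1/H-H)^2$ gives the stated bound. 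The two-identity device is genuinely needed here: a single Cauchy--Schwarz on either identity alone, or on $\Delta=\int X(p-q)\,d\nu$, loses the cross term and yields only the weaker denominator $4$ in place of $4-2H^2$, so your anticipated ``obstacle'' is exactly the right thing to have worried about.
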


To derive a lower bound on the bias-variance trade-off from such an inequality, consider a statistical model $(P_\theta:\theta\in \Theta)$ with $P_\theta$ the distribution of the data for parameter $\theta$ and $\Theta \subseteq \mathbb{R}$ the parameter space. Choosing two parameters $\theta,\theta'\in \Theta,$ inequality \eqref{eq.lem1_2} shows that for any estimator $\wh \theta,$ 
\begin{align}
        \frac{( E_\theta[\wh \theta]-E_{\theta'}[\wh \theta])^2}{4-2r^2(\theta,\theta')}
    \Big( \frac{1}{r(\theta,\theta')}-r(\theta,\theta')\Big)^2
    &\leq \Var_\theta\big(\wh \theta\big)+ \Var_{\theta'}\big(\wh \theta\big),
    \label{eq.1}
\end{align}
with $r(\theta,\theta'):=H({P_\theta},P_{\theta'}),$ $E_\theta:=E_{P_\theta}$, and $\Var_\theta:=\Var_{P_\theta}.$ Introducing the bias $\Bias_\theta(\wh \theta)=\theta-E_\theta[\wh \theta],$ one can now rewrite the difference of the expectations as $E_\theta[\wh \theta]-E_{\theta'}[\wh \theta]=\theta-\theta'-\Bias_\theta(\wh \theta)+\Bias_{\theta'}(\wh \theta).$ If we assume that the bias is smaller than some value, say $B$, and take the parameters $\theta,\theta'$ sufficiently far apart, such that $|\theta-\theta'|\geq 4B,$ reverse triangle inequality yields $|E_\theta[\wh \theta]-E_{\theta'}[\wh \theta]|\geq \tfrac 12 |\theta-\theta'|$ and \eqref{eq.1} becomes
\begin{align*}
    \frac{\frac 14 ( \theta-\theta')^2}{4-2r^2(\theta,\theta')}
    \Big( \frac{1}{r(\theta,\theta')}-r(\theta,\theta')\Big)^2
    &\leq 2\sup_{\theta \in \Theta}\Var_\theta\big(\wh \theta\big).
\end{align*}
The left hand side of this inequality does not depend on the estimator $\wh \theta$ anymore. Therefore, this inequality provides us with a lower bound on the worst-case variance of an arbitrary estimator.

While this applies to a one-dimensional parameter, the same procedure can immediately be extended to derive lower bounds on the worst-case variance for pointwise estimation of a function value $f(x_0)$ in a nonparametric statistical model with unknown regression function $f.$ As shown in \cite{2020arXiv200600278D}, one can extend these ideas moreover to derive lower bounds for the integrated variance and for (high-dimensional) parameter vectors. 

Rephrasing the argument leads moreover to lower bounds on the worst-case bias given an upper bound for the worst-case variance. Taking a suitable asymptotics $\theta'\to \theta$ and imposing standard regularity conditions, it can be shown moreover that \eqref{eq.1} converges to the Cram\'er-Rao lower bound (Theorem A.4 in \cite{2020arXiv200600278D}).

\section{Lower bounds for bias-MAD trade-off}

To measure the stochastic error of an estimator, a competitor of the variance is the mean absolute deviation (MAD). For a random variable $X,$ the MAD is defined as $E\big[ |X-u| \big],$ where the centering point $u$ is either the mean or the median of $X$. If centered at the mean, the MAD is upper bounded by $\sqrt{\Var(X)},$ but compared to the variance, less weight is given to large outcomes of $X.$ For a statistical model $(P_\theta:\theta\in \Theta),$ the most natural extension seems therefore to study the trade-off between $m(\theta)-\theta$ and $E_\theta[|\wh \theta-m(\theta)|],$ where again $m(\theta)$ is either the mean or the median of the estimator $\wh \theta$ under $P_\theta$.

The first result provides an abstract inequality that can be used to relate $m(\theta)-\theta$ and $E_\theta[|\wh \theta-m(\theta)|],$ for any centering $m(\theta).$
It can be viewed as an analogue of \eqref{eq.lem1_2}.

\begin{lemma}
\label{lem.abs_deviation}
Let $P,Q$ be two probability distributions on the  same measurable space and write $E_P, E_Q$ for the expectations with respect to $P$ and $Q$. Then for any random variable $X$ and any real numbers $u$, $v$, we have
    \begin{align}
        \frac 15 \big(1 - H^2(P,Q)\big)^2 |u-v|
        \leq E_P \big[\big|X - u \big|\big]
        \vee E_Q\big[\big|X - v \big|\big],
        \label{eq.MAD_lb}
    \end{align}
\end{lemma}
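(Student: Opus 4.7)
The plan is to exploit the Hellinger affinity identity $\int \sqrt{pq}\, d\nu = 1 - H^2(P,Q)$, which lets me convert the factor $1 - H^2(P,Q)$ on the left-hand side into an integral against the geometric-mean density. Applying the pointwise triangle inequality $|u - v| \leq |X - u| + |X - v|$ inside that integral gives
\[
(1 - H^2(P,Q))\,|u - v| \leq \int |X - u|\sqrt{pq}\, d\nu + \int |X - v|\sqrt{pq}\, d\nu.
\]

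Next I would apply Cauchy--Schwarz to each integral in the factored form
\[
\int |X - u|\sqrt{pq}\, d\nu = \int \sqrt{|X-u|\,p}\,\sqrt{|X-u|\,q}\, d\nu \leq \sqrt{E_P[|X-u|]}\,\sqrt{E_Q[|X-u|]},
\]
and symmetrically for $|X-v|$. The mixed expectations $E_Q[|X-u|]$ and $E_P[|X-v|]$ are not the quantities on the right of \eqref{eq.MAD_lb}, so I would convert them via the triangle inequality at the level of expectations, $E_Q[|X-u|] \leq E_Q[|X-v|] + |u-v|$ and analogously $E_P[|X-v|] \leq E_P[|X-u|] + |u-v|$. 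Writing $M := E_P[|X-u|] \vee E_Q[|X-v|]$ and $t := |u-v|$, these steps collapse to
\[
(1 - H^2(P,Q))\,t \leq 2\sqrt{M(M + t)}.
\]

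The final step is to invert this into a bound of the form $t \leq C M / (1 - H^2(P,Q))^2$. Squaring yields a quadratic inequality in $t$; a short case split on whether $t$ is larger or smaller than $4M/(1 - H^2(P,Q))^2$, together with the trivial bound $1 - H^2(P,Q) \leq 1$, produces $(1 - H^2(P,Q))^2\,t \leq 5M$ in both cases, which is exactly \eqref{eq.MAD_lb}. The main technical obstacle is this last stage: the Cauchy--Schwarz step naturally outputs mixed expectations, and the triangle correction needed to convert them to the one-sided quantity $M$ reintroduces a $|u-v|$ term on the right, forcing the quadratic and producing the constant $1/5$ rather than the cleaner $1/4$ that a direct squaring would suggest.
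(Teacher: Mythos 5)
Your proposal is correct and follows essentially the same route as the paper's proof: Hellinger affinity to introduce $\sqrt{pq}$, the pointwise triangle inequality, Cauchy--Schwarz in the factored form, and the triangle correction of the mixed expectations leading to the relation $(1-H^2(P,Q))\,|u-v| \le 2\sqrt{M(M+|u-v|)}$. The only (immaterial) difference is the final algebra: the paper solves the quadratic in $M$ and uses $\sqrt{1+d^2}-1\ge 2d^2/5$, whereas you do a case split in $|u-v|$; both yield the constant $1/5$.
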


\begin{proof}
Applying the triangle inequality and the Cauchy-Schwarz inequality, we have
\begin{align}
    \begin{split}
    &\big(1- H^2(P,Q) \big) \big| u - v  \big| \\
    &= \int \big| X(\om) - u - X(\om) + v \big|
    \sqrt{p(\om) q(\om)} \, d\nu(\om) \\
    &\leq \int \big| X(\om) - u \big| \sqrt{p(\om) q(\om)} \, d\nu(\om)
    + \int \big| X(\om) - v \big| \sqrt{p(\om) q(\om)} \, d\nu(\om) \\
    &\leq \sqrt{E_P \big[ \big|X - u \big| \big]
    E_Q \big[ \big|X - u \big| \big]}
    + \sqrt{E_P \big[ \big|X - v \big| \big]
    E_Q \big[ \big|X - v \big| \big]}.
    \end{split}
    \label{eq.abs_deviation_basic_ineq}
\end{align}
Bound $E_Q[|X-u|]\leq E_Q[|X-v|]+|u-v|$ and $E_P[|X-v|]\leq E_P[|X-u|]+|u-v|.$ With $a:=E_P[|X-v|] \vee E_Q[|X-u|],$ $b:=|u-v|$ and $d:=1- H^2(P,Q),$ we then have $db\leq 2\sqrt{a^2+ab}$ or equivalently $a^2+ab-d^2b^2/4 \geq 0.$ Since $a\geq 0,$ solving the quadratic equation $a^2+ab-d^2b^2/4=0$ in $a$ gives that $a \geq b (\sqrt{1+d^2}-1)/2.$ Since $0\leq d\leq 1,$ we also have that $\sqrt{1+d^2}-1 \geq 2d^2/5,$ which can be verified by adding one to both sides and squaring. Combining the last two inequalities gives finally the desired result $a\geq bd^2/5.$
\end{proof}

The derived inequality does not directly follow from the triangle inequality $|u-v|\leq |x-u|+|x-v|$ as the expectations on the right-hand side of \eqref{eq.abs_deviation_basic_ineq} are taken with respect to different measures $P$ and $Q.$
Equality up to a constant multiple is attained if $H(P,Q)<1$ and $X=v$ with probability $1$.

An important special case of the previously derived inequality is 
\begin{align}
    \frac 15 \big(1 - H^2(P,Q)\big)^2 \big|E_P[X]-E_Q[X]\big|
        \leq E_P \big[\big|X - E_P[X] \big|\big]
        \vee E_Q\big[\big|X - E_Q[X] \big|\big].
        \label{eq.bsidbb}
\end{align}
Let us now compare this to the change of expectation inequalities involving the variance in the regime where the measures $P$ and $Q$ are close. As mentioned above, $E_P[|X-E_P[X]|]\leq \sqrt{\Var_P(X)}.$ Moreover, $E_P[|X-E_P[X]|]$ and $\sqrt{\Var P(X)}$ are typically of the same magnitude. The Hellinger lower bound for the variance \eqref{eq.lem1_2} is $$\frac 1{\sqrt{4-2H^2(P,Q)}} (1-H^2(P,Q)) \frac{|E_P[X]-E_Q[X]|}{H(P,Q)} \leq \sqrt{\Var_P(X)+ \Var_Q(X)}.$$ Compared to \eqref{eq.bsidbb}, the variance lower bound also includes a term $H(P,Q)^{-1}$ on the left hand side that improves the inequality if the distributions $P$ and $Q$ are close. The next result shows that improving in this regime the inequality \eqref{eq.bsidbb} requires that the likelihood ratio is uniformly close to one. This is much stronger. For instance if $P_\theta$ denotes the distribution of $\mathcal{N}(\theta,1),$ then, $H(P_\theta,P_{\theta'})=1-e^{-\tfrac 18 (\theta-\theta')^2},$ and $H(P_\theta,P_{\theta'})\to 0$ if $\theta -\theta'\to 0.$ However, the likelihood ratio $dP_\theta/dP_{\theta'}$ is unbounded whenever $\theta\neq \theta'.$

\begin{lemma}
Define $0/0$ as $0.$ If $p$ and $q$ are the respective $\nu$-densities of $P$ and $Q,$ then
\begin{align}
    \frac{1 - H^2(P,Q)}{\|\frac{p-q}{p\wedge q}\|_{L^\infty}} \big|E_P[X]-E_Q[X]\big|
        \leq E_P \big[\big|X - E_P[X] \big|\big]
        \vee E_Q\big[\big|X - E_Q[X] \big|\big].
\end{align}
Moreover, if $P,Q$ are defined on a finite probability space, then there exists a random variable $X^*$, such that 
\begin{align}
    E_P \big[\big|X^* - E_P[X^*] \big|\big]
        \vee E_Q\big[\big|X^* - E_Q[X^*] \big|\big]
    \leq \frac{1}{\|\frac{p-q}{p\vee q}\|_{L^\infty}} \big|E_P[X^*]-E_Q[X^*]\big|.    
\end{align}
\end{lemma}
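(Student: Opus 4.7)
My plan is to prove the two inequalities in succession.

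For the \textbf{first inequality}, I would start from the identity
\[
E_P[X] - E_Q[X] \;=\; \int (X - E_P[X])(p - q)\, d\nu,
\]
valid since $\int (p - q)\, d\nu = 0$. Applying the pointwise bound $|p - q| \leq \bigl\|\tfrac{p-q}{p\wedge q}\bigr\|_{L^\infty} (p \wedge q)$ (with the convention $0/0 = 0$) and then $p\wedge q \leq p$ under the integral, the triangle inequality gives
\[
\bigl|E_P[X] - E_Q[X]\bigr|
\;\leq\; \Bigl\|\tfrac{p-q}{p\wedge q}\Bigr\|_{L^\infty}\;
E_P\bigl[|X - E_P[X]|\bigr].
\]
The analogous bound with $P$ and $Q$ swapped holds by symmetry. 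Combining them gives $|E_P[X] - E_Q[X]| \leq \bigl\|\tfrac{p-q}{p\wedge q}\bigr\|_{L^\infty}\cdot\min(E_P[|X-E_P[X]|], E_Q[|X-E_Q[X]|])$, which is strictly stronger than the stated inequality because $1 - H^2(P,Q) \leq 1$ and the minimum of the two MADs is at most their maximum.

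For the \textbf{second inequality}, the natural candidate is $X^* = \1_{\{\om^*\}}$, where $\om^* \in \Omega$ attains the essential supremum of $|p-q|/(p\vee q)$. Direct computation yields $|E_P[X^*] - E_Q[X^*]| = |p(\om^*) - q(\om^*)|$, so the right-hand side of the target inequality reduces to $p(\om^*)\vee q(\om^*)$, whereas the MADs are $2 p(\om^*)(1 - p(\om^*))$ and $2 q(\om^*)(1 - q(\om^*))$. Because $2x(1 - x) \leq 1/2$ for all $x \in [0, 1]$, both MADs are bounded by $p(\om^*) \vee q(\om^*)$ as soon as $p(\om^*) \vee q(\om^*) \geq 1/2$, and the inequality is immediate in this regime.

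The \textbf{main obstacle} is the residual case $p(\om^*) \vee q(\om^*) < 1/2$, in which the MAD of a single-atom indicator overshoots $|p(\om^*) - q(\om^*)|/\bigl\|\tfrac{p-q}{p\vee q}\bigr\|_{L^\infty}$. To handle it, I would replace $\1_{\{\om^*\}}$ by $\1_{S^*}$ for a set $S^*$ built by a Hahn-type greedy procedure: starting from $\om^*$, aggregate further atoms on the same side of $\{p>q\}$ or $\{p<q\}$ until $P(S^*)\vee Q(S^*)$ reaches $1/2$, and verify that the aggregated ratio $|P(S^*) - Q(S^*)|/(P(S^*)\vee Q(S^*))$ remains close enough to $\bigl\|\tfrac{p-q}{p\vee q}\bigr\|_{L^\infty}$. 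This combinatorial bookkeeping---tracking how the ratio degrades as mass is added---is what I expect to require the most care, and is where the finiteness of $\Omega$ gets used in an essential way.
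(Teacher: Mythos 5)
Your proof of the first inequality is correct and takes a genuinely different route from the paper. You expand $E_P[X]-E_Q[X]=\int(X-E_P[X])(p-q)\,d\nu$ directly, whereas the paper writes the difference of expectations against the weight $\sqrt{pq}$, splitting $p-q$ as $(p-\sqrt{pq})+(\sqrt{pq}-q)$ and centering each piece at the corresponding mean; dividing by $\int\sqrt{pq}\,d\nu=1-H^2(P,Q)$ is what produces the Hellinger factor in the statement. Your version dispenses with that factor and replaces the maximum of the two mean absolute deviations by their minimum, so it is strictly stronger, and the stated bound follows since $1-H^2(P,Q)\leq 1$ and $\min\leq\max$. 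The only point to make explicit is that if $p\wedge q=0$ on a set of positive measure where $p\neq q$, the norm is $+\infty$ and the claim is vacuous; otherwise $|p-q|\leq \big\|\tfrac{p-q}{p\wedge q}\big\|_{L^\infty}(p\wedge q)\leq \big\|\tfrac{p-q}{p\wedge q}\big\|_{L^\infty}\, p$ holds $\nu$-a.e., as you use.

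For the second inequality there is a genuine gap, and it cannot be closed along the lines you sketch. Your computation of the mean absolute deviation of $\1(\om=\om^*)$ as $2p(\om^*)(1-p(\om^*))$ is correct, and you rightly observe that the single-atom construction only delivers the bound when $p(\om^*)\vee q(\om^*)\geq 1/2$. (The paper's own proof records this quantity as $p_{j^*}(1-p_{j^*})$, dropping the factor $2$, which is the only reason its argument closes unconditionally.) But the residual case is not repairable by aggregating atoms: take $\Omega=\{\om_1,\om_2\}$ with $p=(3/10,7/10)$ and $q=(1/10,9/10)$, so that $\big\|\tfrac{p-q}{p\vee q}\big\|_{L^\infty}=2/3$. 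Every random variable on $\Omega$ is an affine function of $\1(\om=\om_1)$, and both sides of the target inequality are translation invariant and absolutely homogeneous, so for every non-constant $X$ the ratio of the left-hand side to $|E_P[X]-E_Q[X]|$ equals $0.42/0.2=2.1$, which exceeds $1/(2/3)=3/2$. Hence only a constant $X^*$ (for which both sides vanish) satisfies the display in this example; your greedy aggregation has nothing to aggregate, so the ``combinatorial bookkeeping'' you defer is not a technicality but precisely where the argument breaks down. Rather than trying to complete this step, you should flag the issue: your careful MAD computation exposes a defect in the paper's own proof of the second claim.
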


\begin{proof}
Using that $\int (X(\omega)-E_P[X]) p(\omega)\, d\nu (\omega)=0$ and $\int (X(\omega)-E_Q[X]) q(\omega)\, d\nu (\omega)=0,$ we have the identity
\begin{align*}
    &\int \sqrt{p(\omega)q(\omega)} \, d\nu(\omega) \,
    \big(E_P[X]-E_Q[X]\big) \\
    &=
    \int \big(X(\omega) -E_P[X]\big)
    \Big( p(\omega) -\sqrt{p(\omega)q(\omega)}\Big)
    d\nu(\omega)\\ 
    &\quad +
    \int \big(X(\omega) -E_Q[X]\big)
    \Big(\sqrt{p(\omega)q(\omega)}- q(\omega)\Big)
    d\nu(\omega).
\end{align*}
Taking the absolute value inside the integrals gives
\begin{align*}
    &\int \sqrt{p(\omega)q(\omega)} \, d\nu(\omega) \,
    \big|E_P[X]-E_Q[X]\big| \\
    &\leq
    \int \big|X(\omega) -E_P[X]\big| p(\omega) d\nu(\omega) \, 
    \bigg\| 1 -\sqrt{\frac{q(\omega)}{p(\omega)}}\bigg\|_{L^\infty} 
    \\ 
    &\quad +
    \int \big|X(\omega) -E_Q[X]\big| q(\omega) d\nu(\omega) \, 
    \bigg\| \sqrt{\frac{p(\omega)}{q(\omega)}}-1\bigg\|_{L^\infty}.
\end{align*}
By definition of the Hellinger distance, $\int \sqrt{p(\omega)q(\omega)}=1-H^2(P,Q).$ Moreover, for $a,b\geq 0,$ we have $|1-\sqrt{a/b}|=|(b-a)/((\sqrt{a}+\sqrt{b})\sqrt{b})|\leq |(b-a)/(b\wedge a)|.$ Combining these arguments gives
\begin{align*}
        \big(1 - H^2(P,Q)\big)\big|E_P[X]-E_Q[X]\big|
        \leq \Big(E_P \big[\big|X - E_P[X] \big|\big]
        \vee E_Q\big[\big|X - E_Q[X] \big|\big]\Big) \bigg\|\frac{p-q}{p\wedge q}\bigg\|_{L^\infty},
\end{align*}
proving the first claim. 

For the second claim, recall that the probability space $\Omega=\{\omega_j, j=1,2,\ldots, M\}$ is assumed to be finite and denote by $p_j:=P(\{\omega_j\}),$ $q_j:=Q(\{\omega_j\}),$ $j=1,2,\ldots, M$ the respective probability mass functions of $P$ and $Q.$ Define the random variable $X_j(\omega):=\mathbf{1}(\omega=\omega_j).$ Then, $E_P[X_j]-E_Q[X_j]=p_j-q_j,$ $E_P[|X_j-E_P[X_j]|]=p_j(1-p_j),$ and $E_Q[|X_j-E_Q[X_j]|]=q_j(1-q_j).$ For $j^*\in \operatorname{argmax}_{\ell=1,\ldots,M} |p_\ell-q_\ell|/(p_\ell\vee q_\ell),$ \[E_P[|X_{j^*}-E_P[X_{j^*}]|] \vee E_Q[|X_{j^*}-E_Q[X_{j^*}]|]\leq p_{j^*}\vee q_{j^*}= \frac{1}{\max_\ell |\frac{p_\ell-q_\ell}{p_\ell\vee q_\ell}|} \underbrace{|p_{j^*}-q_{j^*}|}_{=|E_P[X_{j^*}]-E_Q[X_{j^*}]|}.\]
\end{proof}

For the application to statistics, the random variable $X$ is an estimator. Thus, given $X$, a related question is to find a random variable $X'$ with $E_P[X']=E_P[X]$ and $E_Q[X']=E_Q[X],$ but smaller mean absolute deviations $E_P[|X'-E_P[X']|]<E_P[|X-E_P[X]|]$ and $E_Q[|X'-E_Q[X']|]<E_Q[|X-E_Q[X]|].$ In particular, for the trade-off between bias and mean absolute deviation, it does not seem favorable that $X$ attains large values, as this mainly increases the mean absolute deviation. If for a measurable set $A,$ the conditional means are the same, that is, $E_P[X| X\in A]=E_Q[X|X\in A],$ then, $X'=X\mathbf{1}(X\in A^c)+E_P[X| X\in A] \mathbf{1}(X\in A)$ satisfies $E_P[X']=E_P[X],$ $E_Q[X']=E_Q[X],$ 
\begin{align*}
    &E_P[|X'-E_P[X']|] \\
    &=E_P[|X'-E_P[X]|] \\
    &=E_P\big[|X-E_P[X]| \, \big| \, X\in A^c\big]P(A^c)+E_P\big[|E_P[X| X\in A]-E_P[X]| \, \big| \, X\in A\big]P(A) \\
    &=E_P\big[|X-E_P[X]| \, \big| \, X\in A^c\big]P(A^c)+\big|E_P[X| X\in A]-E_P[X]\big|P(A) \\
    &=E_P\big[|X-E_P[X]| \, \big| \, X\in A^c\big]P(A^c)+\big|E_P\big[X-E_P[X] \, \big| X\in A\big]\big|P(A) \\
    &\leq E_P\big[|X-E_P[X]| \, \big| \, X\in A^c\big]P(A^c)+E_P\big[\big|X-E_P[X]\big| \, \big| X\in A\big] P(A) \\
    &= E_P[|X-E_P[X]|],
\end{align*}
and similarly $E_Q[|X'-E_Q[X']|]\leq E_Q[|X-E_Q[X]|].$ The argument can be viewed as a variation of the convex loss version of the Rao-Blackwell theorem.

\section{Application to pointwise estimation in the Gaussian white noise model}

In the Gaussian white noise model, we observe a random function $Y=(Y_x)_{x\in [0,1]},$ with
\begin{equation}
    dY_x = f(x) \, dx + \frac{1}{\sqrt{n}} \, dW_x,
    \label{eq.mod_GWN}
\end{equation}
where $W$ is an unobserved standard Brownian motion. The aim is to recover the unobserved, real-valued regression function $f\in L^2([0,1])$ from the data $Y$. Below, we study the bias-MAD trade-off for estimation of $f(x_0)$ with fixed $x_0\in [0,1].$

\medskip

Concerning upper bounds for the MAD risk in this setting, optimal convergence rates are obtained in \cite{MR855002} and the first order asymptotics of the mean absolute deviation risk for Lipschitz functions is derived in \cite{MR1292544}.

To obtain lower bounds for the bias-MAD trade-off, denote by $P_f$ the data distribution of the Gaussian white noise model with regression function $f.$ It is known that the Hellinger distance is
\begin{align}
    H^2(P_f,P_g)=1-\exp\Big(-\frac n8 \|f-g\|_2^2\Big),
    \label{eq.Hell_GWN}
\end{align}
whenever $f,g \in L^2([0,1]),$ see \cite{2020arXiv200600278D} for a reference and a derivation. This means that the inequality \eqref{eq.MAD_lb} becomes 
\begin{align}
    \frac 15 \exp\Big(-\frac n4 \|f-g\|_2^2\Big) |u-v|
        \leq E_f \big[\big|X - u \big|\big]
        \vee E_g\big[\big|X - v \big|\big].
        \label{eq.Hell_GWN_rewritten}
\end{align}
As commonly done in nonparametric statistics, we impose an H\"older smoothness condition on the regression function $f.$ 
Let $R > 0$, $\beta > 0$ and denote by $\floorbeta$ the largest integer that is strictly smaller than $\beta$. On a domain $D \subseteq \Rb,$ we define the $\beta$-H\"older norm by $\| f \|_{\Cc^\beta(D)}= \sum_{\ell \leq \floorbeta}  \|f^{(\ell)} \|_{L^\infty(D)}+ \sup_{x, y \in D, x\neq y} |f^{(\floorbeta)}(x) - f^{(\floorbeta)}(y)  | / | x - y  |^{\beta - \floorbeta},$ with $L^\infty(D)$ the supremum norm on $D$ and $f^{(\ell)}$ denoting the $\ell$-th (strong) derivative of $f$ for $\ell \leq \floorbeta$. For $D=[0,1],$ let $\Cc^\beta(R):=\{f:[0,1] \to \Rb :\| f \|_{\Cc^\beta([0,1])} \leq R\}$ be the ball of $\beta$-H\"older smooth functions $f:[0,1] \to \Rb$ with radius $R.$ We also write $\Cc^\beta(\Rb):=\{K:\Rb \to \Rb :\| K \|_{\Cc^\beta(\Rb)} < \infty\}.$

\begin{thm}\label{thm.MAD_lb}
Consider the Gaussian white noise model \eqref{eq.mod_GWN} with parameter space $\Cc^\beta(R)$. Let $C>0$ be a positive constant. If $\wh f(x_0)$ is an estimator for $f(x_0)$ satisfying
\begin{equation*}
    \sup_{f\in \Cc^\beta(R)} \big|\Bias_f(\wh f(x_0))\big|
    < \Big(\frac C n\Big)^{\beta/(2\beta+1)},
\end{equation*}
then, there exist positive constants $c=c(C,R)$ and $N=N(C,R),$ such that
\begin{align*}
        \sup_{f\in \Cc^\beta(R)} E_f \big[\big|\wh f(x_0) -  E_f[\wh f(x_0)] \big|\big]
        &\geq c n^{- \beta / (2 \beta + 1)}, \quad \text{for all} \ n\geq N.
\end{align*}
Explicit expressions for $c$ and $N$ can be derived from the proof. If $\Med_f[\wh f(x_0)] |]$ denotes the median of $\wh f(x_0),$ then the same holds if $\Bias_f(\wh f(x_0))$ and $E_f[|\wh f(x_0) -  E_f[\wh f(x_0)] |]$ are replaced by $\Med_f[\wh f(x_0)]-f(x_0)$ and $E_f[|\wh f(x_0) -  \Med_f[\wh f(x_0)] |],$ respectively.
\end{thm}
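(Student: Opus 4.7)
The plan is to adapt the classical two-hypothesis nonparametric lower bound recipe, substituting the MAD-specific inequality \eqref{eq.Hell_GWN_rewritten} for the usual variance- or testing-based ingredient. Fix a nonzero bump function $K\in \Cc^\beta(\Rb)$ with $\supp K \subseteq [-1,1]$, and set $f_0\equiv 0$ together with
\begin{equation*}
    f_1(x) \;:=\; L\, h_n^{\beta}\, K\!\Big(\frac{x-x_0}{h_n}\Big),
\end{equation*}
where the bandwidth is $h_n := A n^{-1/(2\beta+1)}$ and $L,A>0$ are constants to be fixed. The $\ell$-th derivative of $f_1$ equals $L\, h_n^{\beta-\ell} K^{(\ell)}((\cdot-x_0)/h_n)$; since $\ell\leq\floorbeta<\beta$ the factor $h_n^{\beta-\ell}\leq 1$ for $h_n\leq 1$, and an analogous rescaling argument for the top-order Hölder seminorm shows $\|f_1\|_{\Cc^\beta([0,1])}\leq L\|K\|_{\Cc^\beta(\Rb)}$. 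Fixing $L := R/\|K\|_{\Cc^\beta(\Rb)}$ then guarantees $f_1\in \Cc^\beta(R)$ as soon as $h_n\leq \min(1,x_0,1-x_0)$ so that the bump fits inside $[0,1]$.

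The two key scaling identities are
\begin{align*}
    f_1(x_0) - f_0(x_0) &\;=\; L\, K(0)\, A^{\beta}\cdot n^{-\beta/(2\beta+1)},\\
    n\,\|f_1 - f_0\|_2^2 &\;=\; L^2 A^{2\beta+1} \|K\|_2^2.
\end{align*}
The second identity makes the Hellinger weight $\exp(-\tfrac{n}{4}\|f_1-f_0\|_2^2)$ in \eqref{eq.Hell_GWN_rewritten} a strictly positive constant depending only on $L$, $A$ and $K$. Applying \eqref{eq.Hell_GWN_rewritten} with $X:=\wh f(x_0)$, $u:=E_{f_0}[\wh f(x_0)]$ and $v:=E_{f_1}[\wh f(x_0)]$, the reverse triangle inequality combined with the bias hypothesis yields
\begin{equation*}
    |u-v| \;\geq\; |f_1(x_0)-f_0(x_0)| - 2\sup_{f\in\Cc^\beta(R)}\big|\Bias_f(\wh f(x_0))\big|
    \;\geq\; \bigl(L|K(0)|A^\beta - 2\, C^{\beta/(2\beta+1)}\bigr) n^{-\beta/(2\beta+1)}.
\end{equation*}
Choosing $A$ large enough that $L|K(0)|A^\beta \geq 4\, C^{\beta/(2\beta+1)}$ forces $|u-v|\geq \tfrac{1}{2}L|K(0)|A^\beta\, n^{-\beta/(2\beta+1)}$, and substituting back into \eqref{eq.Hell_GWN_rewritten} closes the proof with an explicit $c = c(R,C,\beta,K)$. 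The threshold $N$ is simply the smallest $n$ for which $h_n\leq \min(1,x_0,1-x_0)$.

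For the median version, exactly the same argument applies with the choices $u:= \Med_{f_0}[\wh f(x_0)]$ and $v := \Med_{f_1}[\wh f(x_0)]$ in Lemma~\ref{lem.abs_deviation}; the reverse triangle step now uses the hypothesis that $|\Med_f[\wh f(x_0)]-f(x_0)|$ is uniformly bounded by $(C/n)^{\beta/(2\beta+1)}$ in place of the bias hypothesis, and the conclusion becomes a lower bound on the median-centered MAD. The only delicate point in the whole argument is the Hölder norm verification for $f_1$: one must check that the rescaled bump stays inside $\Cc^\beta(R)$ uniformly in $n$, which is exactly what motivates placing the prefactor $L h_n^\beta$ in front of $K((\cdot-x_0)/h_n)$ so as to cancel the $h_n^{-(\beta-\floorbeta)}$ blow-up of the top-order Hölder seminorm of the rescaled argument. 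Apart from this calibration, no step is conceptually hard; the proof is a clean application of Lemma~\ref{lem.abs_deviation} and the Hellinger formula \eqref{eq.Hell_GWN}.
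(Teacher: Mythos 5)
Your proof is correct and follows essentially the same route as the paper: a rescaled bump $f_1$ calibrated to lie in $\Cc^\beta(R)$, the Hellinger formula \eqref{eq.Hell_GWN}, and Lemma~\ref{lem.abs_deviation} applied to the pair $(P_{f_0},P_{f_1})$. Two remarks on where the details diverge. First, where you enlarge the separation to $|f_1(x_0)-f_0(x_0)|\geq 4(C/n)^{\beta/(2\beta+1)}$ by tuning $A$ and then apply the reverse triangle inequality, the paper fixes the separation at exactly $2(C/n)^{\beta/(2\beta+1)}$ and uses a sign trick: it keeps both bumps $f_{+1}$ and $f_{-1}$ and pairs $f_0$ with whichever one has sign opposite to $E_{f_0}[\wh f(x_0)]$, so that $|u-v|\geq (C/n)^{\beta/(2\beta+1)}$ without losing the factor that the plain reverse triangle inequality would cost at separation $2(C/n)^{\beta/(2\beta+1)}$. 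Both close the argument; yours pays through a smaller constant $c$, since the Hellinger weight $\exp(-\tfrac14 L^2A^{2\beta+1}\|K\|_2^2)$ shrinks as $A$ grows, which is harmless. Second, the condition $\supp K\subseteq[-1,1]$ together with $h_n\leq\min(1,x_0,1-x_0)$ is unnecessary and is actually unsatisfiable when $x_0\in\{0,1\}$, which the theorem allows. Since $f_1$ only needs to be controlled on $[0,1]$, it suffices (as the paper does) to bound $\|f_1\|_{\Cc^\beta([0,1])}$ and $\|f_1\|_{L^2([0,1])}$ by the corresponding norms over $\Rb$; your $L^2$ identity then becomes an inequality, which is the direction needed to lower-bound the exponential. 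You should also require $K(0)\neq 0$ explicitly --- a nonzero $K$ does not guarantee this, and your separation is proportional to $|K(0)|$.
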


The result is considerably weaker than the earlier derived lower bounds for the bias-variance trade-off for pointwise estimation. This is due to the fact that \eqref{eq.MAD_lb} is less sharp. Nevertheless, the conclusion provides still more information than the minimax lower bound for the absolute value loss. To see this, observe that by the triangle inequality,

\begin{align*}
  \sup_{f\in \Cc^\beta(R)} E_f [|\wh f(x_0) -  E_f[\wh f(x_0)]|]
  &\geq \sup_{f\in \Cc^\beta(R)} E_f[|\wh f(x_0)-f(x_0)|]-|\Bias_f(\wh f(x_0))|  \\
  &\geq \sup_{f\in \Cc^\beta(R)} E_f[|\wh f(x_0)-f(x_0)|]-\sup_{f\in \Cc^\beta(R)} |\Bias_f(\wh f(x_0))|.
\end{align*}
Thus, the conclusion of Theorem \ref{thm.MAD_lb} can be deduced from the minimax lower bound $\sup_{f\in \Cc^\beta(R)} \linebreak E_f[|\wh f(x_0)-f(x_0)] \geq (K/n)^{\beta / (2 \beta + 1)},$ as long as $C<K.$ Arguing via the minimax rate, nothing, however, can be said if $C>K,$ that is, the bias is of the optimal order with a potentially large constant. Indeed, if we would change the role of the worst-case bias and the worst-case risk in the previous display, we get the lower bound
\begin{align*}
  \sup_{f\in \Cc^\beta(R)} E_f [|\wh f(x_0) -  E_f[\wh f(x_0)]|]
  &\geq \sup_{f\in \Cc^\beta(R)} |\Bias_f(\wh f(x_0))|-\sup_{f\in \Cc^\beta(R)} E_f[|\wh f(x_0)-f(x_0)|].
\end{align*}
Since $\wh f(x_0)$ is an arbitrary estimator, we cannot exclude the possibility that \[\sup_{f\in \Cc^\beta(R)} \big|\Bias_f(\wh f(x_0))\big|\newline \approx \sup_{f\in \Cc^\beta(R)} E_f\big[|\wh f(x_0)-f(x_0)|\big].\] However, Theorem \ref{thm.MAD_lb} shows that even in the case $C>K$, the worst-case variance cannot converge faster than $n^{-\beta/(2\beta+1)}.$

\begin{proof}[Proof of Theorem \ref{thm.MAD_lb}]
For any function $K \in \Cc^\beta(\Rb)$ satisfying $K(0) = 1$ and $\|K\|_2 < +\infty,$ define $V:=R/\|K\|_{\Cc^\beta(\Rb)},$ $r_n:=(2/V)^{1/\beta}(C/n)^{1/(2\beta+1)},$ and
\begin{equation*}
    \Fc:=\Big\{f_\theta(x) =
    \theta
    V r_n^\beta K \Big( \frac{ x - x_0 }{r_n} \Big) : |\theta| \leq  1 \Big\}.
\end{equation*}
By Lemma B.1 in \cite{2020arXiv200600278D}, we have for $0 < h \leq 1,$ $\|h^\beta K ( (\cdot - x_0 )/h )\|_{\Cc^\beta(\Rb)}\leq \|K\|_{\Cc^\beta(\Rb)}.$ Since $r_n\leq 1$ for all sufficiently large $n$, taking $h=r_n,$ we find $\|f_\theta\|_{\Cc^\beta([0,1])}\leq |\theta| V \|K\|_{\Cc^\beta(\Rb)} \leq R$ for all $\theta \in [-1,1].$ Thus, $\Fc \subseteq \Cc^\beta(R)$ whenever $r_n \leq 1.$ We can now apply \eqref{eq.Hell_GWN_rewritten} to the random variable $\wh f(x_0)$ choosing $P=P_{f_{\pm 1}},$ $Q=P_0$ and centering $u=E_{f_{\pm 1}}[\wh f(x_0)],$ $v=E_0[\wh f(x_0)]$,
\begin{align*}
    &\frac 15 \exp\Big(- \frac{n}4 \|f_{\pm 1}\|_2^2\Big)
    \Big| E_{f_{\pm 1}}[\wh f(x_0)] - E_0[\wh f(x_0)]\Big| \\
    &\leq E_{f_{\pm 1}} \big[\big|\wh f(x_0) -  E_{f_{\pm 1}} [\wh f(x_0)] \big|\big]
    \vee E_0\big[\big|\wh f(x_0) - E_0[\wh f(x_0)] \big|\big].
\end{align*}
Using substitution and the definition $r_n=(2/V)^{1/\beta}(C/n)^{1/(2\beta+1)},$ we find $$\|f_{\pm 1}\|_2^2\leq V^2 r_n^{2\beta} \int_{\Rb} K^2\Big(\frac{x-x_0}{r_n}\Big) \, dx=
V^2 r_n^{2\beta+1}\|K\|_2^2= \frac 1n 2^{2+1/\beta} V^{-1/\beta} C\|K\|_2^2$$ and so,
\begin{align*}
    \frac 15 \exp\bigg( -\Big(\frac 2V\Big)^{1/\beta} C\|K\|_2^2\bigg) \big| E_{f_{\pm 1}}[\wh f(x_0)] - E_0[\wh f(x_0)]  \big|
    &\leq 
    \sup_{f \in \Cc^\beta(R)} E_f \big|\wh f(x_0) -  E_f[\wh f(x_0)] \big|.
\end{align*}
Due to $K(0)=1$ and the definition of $r_n,$ we have $f_{\pm 1}(x_0)= \pm V r_n^\beta= \pm 2(C/n)^{\beta/(2\beta+1)}$ and because of the bound on the bias, $E_{f_1}[\wh f(x_0)]\geq (C/n)^{\beta/(2\beta+1)}$ and $E_{f_{-1}}[\wh f(x_0)]\leq -(C/n)^{\beta/(2\beta+1)}.$ Choosing for the lower bound $f_1$ if $E_{f_0}[\wh f(x_0)]$ is negative and $f_{-1}$ if $E_{f_0}[\wh f(x_0)]$ is positive, we find
\begin{align*}
    \frac 15 \exp\bigg( -\Big(\frac 2V\Big)^{1/\beta} C\|K\|_2^2\bigg) \Big(\frac{C}{n}\Big)^{\frac{\beta}{2\beta+1}}
    \leq
    \sup_{f \in \Cc^\beta(R)} E_f \big|\wh f(x_0) -  E_f[\wh f(x_0)] \big|,
\end{align*}
proving the claim. The proof for the median centering follows exactly the same steps. 
\end{proof}

\section{Further extensions of the bias-variance trade-off}

A natural follow-up question is to wonder about other concepts to measure systematic and stochastic error of an estimator. This section is intended as an overview of related concepts.

A large chunk of literature on variations of the bias-variance trade-off is concerned with extensions to classification under $0$\,-\,$1$ loss, see \cite{kohavi1996bias, breiman1996bias, tibshirani1996bias, james1997generalizations}. These approaches have been compared
in~\cite{rozmusmethods}. \cite{le2005bias} proposes an extension to the multi-class setting.
In a Bayesian framework, \cite{wolpert1997bias} argues that the bias-variance trade-off becomes a bias-covariance-covariance trade-off, where a covariance correction is added. For relational domains, \cite{neville2007bias} propose to separate the bias and the variance due to the learning process from the bias and the variance due to the inference process. Bias-variance decompositions for the Kullback-Leibler divergence and for the log-likelihood are studied in~\cite{Heskes1998likelihood_based}.
Somehow related, \cite{wu2012decomposition} introduces the Kullback-Leibler bias and the Kullback-Leibler variance, and shows, using information theory, that a similar decomposition is valid.
\cite{Domingos2000unified} propose generalized definitions of bias and variance for a general loss, but without showing a bias-variance decomposition. For several exponential families \cite{hansen2000general} shows that there exist a loss $L$ such that a bias-variance decomposition of $L$ is possible.
\cite{James2003variance} studied a bias-variance decomposition for arbitrary loss functions, comparing different ways of defining the bias and the variance in such cases.

\section*{Acknowledgements}
The project has received funding from the Dutch Research Council (NWO) via the Vidi grant VI.Vidi.192.021.



\bibliographystyle{elsarticle-harv} 
\bibliography{biblio}

\end{document}